\documentclass[10pt]{amsart}

\usepackage{amssymb}
\usepackage{amsmath}
\usepackage{bbm}
\usepackage{psfrag}
\usepackage{color}
\usepackage{graphicx}
\usepackage{amsthm}

\DeclareMathOperator{\step}{\mathbbm{1}}

\newtheorem{theorem}{Theorem}[section] 
\newtheorem{lemma}[theorem]{Lemma}     

\newtheorem{definition}[theorem]{Definition}

\newtheorem{remark}[theorem]{Remark}


\begin{document}
\title[Functional calculus using systems theory]{Functional calculus for $C_0$-semigroups using infinite-dimensional systems theory
}
\author{Felix L. Schwenninger}
\address{Felix L. Schwenninger\\
Department of Applied Mathematics\\ P.O.\ Box 217, 7500 AE Enschede\\ The Netherlands}
\email{f.l.schwenninger@utwente.nl}
\date{5 February 2015}
\thanks{The first named author has been supported by the Netherlands Organisation for Scientific
Research (NWO), grant no. 613.001.004.}

\author{Hans Zwart}
\address{Hans Zwart\\ Department of Applied Mathematics\\ P.O.\ Box 217, 7500 AE Enschede\\ The Netherlands}
\email{h.j.zwart@utwente.nl}

\dedicatory{
\textit{
Dedicated to Charles Batty on the occasion of his sixtieth birthday.
}
}

\begin{abstract}
  In this short note we use ideas from systems theory to define a
  functional calculus for infinitesimal generators of strongly
  continuous semigroups on a Hilbert space. Among others, we show how
  this leads to new proofs of (known) results in functional calculus.
\end{abstract}

\subjclass[2010]{47A60 (primary), 93C25 (secondary)}
\keywords{Functional calculus; $H^{\infty}$-calculus; $C_{0}$-semigroups; Infinite-dimensional systems theory; Admissibility}

\maketitle

\section{Introduction}

  Let $A$ be a linear operator on the linear space $X$. In essence, a functional
  calculus provides for every (scalar) function $f$ in the algebra
  ${\mathcal A}$ a linear operator $f(A)$ from (a subspace of) $X$ to
  $X$ such that
 \begin{itemize}
 \item $f\mapsto f(A)$ is linear;
 \item $f(s)\equiv1$ is mapped on the identity $I$;
 \item If $f(s)=(s-r)^{-1}$, then $f(A)=(A-rI)^{-1}$;
 \item For $f=f_1\cdot f_2$ we have $f(A) = f_1(A) f_2(A)$.
 \end{itemize}
 As the domains of the operators $f(A)$ might differ, the above properties have to be seen formally, and, in general, need to be made rigorous. 
 It is well-known that self-adjoint (or unitary operators) on a Hilbert
 space have a functional calculus with ${\mathcal A}$ being the set of
 continuous functions from ${\mathbb R}$ (or the torus $\mathbb{T}$ respectively) to ${\mathbb C}$, (von Neumann
 \cite{Neum96}). This theory has been further extended to different
 operators and algebra's, see e.g.\ \cite{HiPh57}, \cite{DuSc71}, and
 \cite{AlDM96}. For an excellent overview, in particular on the $\mathcal{H}^{\infty}$-calculus, we refer to the book by
 Markus Haase, \cite{Haas06}. 

 For the algebra of bounded analytic functions on the left half-plane
 and $A$ the infinitesimal generator of a strongly continuous
 semigroup, we show how to build a
 functional calculus using infinite-dimensional systems theory.

\section{Functional calculus for ${\mathcal H}_{\infty}^-$}

We choose our class of functions to be ${\mathcal
  H}_{\infty}^-$, i.e., the algebra of bounded analytic functions on
the left half-plane. For $A$ we choose the generator of an
exponentially stable strongly continuous semigroup on
the Hilbert space $X$. This semigroup will be denoted by $\left(e^{At}\right)_{t\geq 0}$. We refer to \cite{EnNa00} for a detailed overview on  $C_{0}$-semigroups. In the following  all semigroups are assumed to be strongly continuous. To explain our choice/set-up we start with the following
observation.

  Let $h$ be an integrable function from ${\mathbb R}$ to ${\mathbb
    C}$ which is zero on $(0,\infty)$ and let $t\mapsto\step(t)$
  denote the indicator function of $[0,\infty)$, i.e., $\step(t)=1$ for $t\geq0$ and $\step(t)=0$ for $t<0$. Then for $t>0$
  \begin{eqnarray*}
     \left( h * e^{A\cdot}x_0\step(\cdot) \right) (t)&=&\int_{-\infty}^{\infty} h(\tau) e^{A(t-\tau)} x_0\step(t-\tau) d\tau \\
     &=& \left[\int_{-\infty}^t h(\tau) e^{-A\tau} d\tau \right] e^{At}x_0\\
    & = 
& 
\left[\int_{-\infty}^0 h(\tau) e^{-A\tau} d\tau \right] e^{At}x_0.
  \end{eqnarray*}
  Hence the convolution of $h$ with the semigroup gives an operator
  times the semigroup. We denote this operator by $g(A)$, with $g$ the
  Laplace transform of $h$.

  Now we want to extend the mapping $g \mapsto g(A)$. Therefore we
  need the Hardy space $H^2(X)=H^{2}(\mathbb{C}_{+};X)$, i.e., the set
  of $X$-valued functions, analytic on the right half-plane which are
  uniformly square integrable along every line parallel to the
  imaginary axis. By the (vector-valued) Paley-Wiener Theorem, this
  space is isomorphic to $L^2((0,\infty);X)$ under the Laplace
  transform, see \cite[Theorem 1.8.3]{ABHN}.
\begin{definition}
\label{D2.1}
  Let $X$ be a Hilbert space. For $g \in {\mathcal H}_{\infty}^-$ and
  $f \in L^2((0,\infty);X)$ we define the {\em Toeplitz operator}
  \begin{equation}
    \label{eq:2}
     M_g(f) = {\mathfrak L}^{-1} \left[ \Pi (g  \left({\mathfrak L}\left(f \right)\right) \right],
  \end{equation}
  where ${\mathfrak L}$ and ${\mathfrak L}^{-1}$ denotes the Laplace transform and its inverse, respectively, and $\Pi$ is the projection from $L^2(i{\mathbb R}, X)$ onto $H^2(X)$.
\end{definition}
\begin{remark}
  If we take $f(t)=e^{At}x_0$, $t \geq 0$, and ``$g={\mathfrak L}(h)$'', then this extends the previous convolution.
\end{remark}

The following norm estimate is easy to see.
\begin{lemma}
\label{L2.3} Under the conditions of Definition \ref{D2.1} we have that
$M_g$ is a bounded linear operator from $L^2((0,\infty);X)$ to itself
with norm satisfying
\begin{equation}
  \label{eq:3}
  \|M_g\| \leq \|g\|_{\infty}. 
\end{equation}
\end{lemma}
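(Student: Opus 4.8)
The plan is to read the bound off directly from the evident factorisation of $M_g$ as a composition of three maps, each of which is easy to control: the Laplace transform ${\mathfrak L}$, pointwise multiplication by $g$ along the imaginary axis, and the Riesz (orthogonal) projection $\Pi$. Since all three are linear, so is $M_g$, and only the norms need tracking.

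First I would recall that, by the vector-valued Paley--Wiener theorem cited above, ${\mathfrak L}$ is, up to the harmless normalising constant $\sqrt{2\pi}$, an isometric isomorphism from $L^2((0,\infty);X)$ onto $H^2(X)$; in particular $\|{\mathfrak L}\|\,\|{\mathfrak L}^{-1}\|=1$ once these constants cancel. Moreover $H^2(X)$ embeds isometrically into $L^2(i{\mathbb R};X)$ via non-tangential boundary values, so \eqref{eq:3} reduces to the claim that $F\mapsto \Pi(gF)$ has norm at most $\|g\|_{\infty}$ on $L^2(i{\mathbb R};X)$, applied to $F={\mathfrak L}(f)\in H^2(X)$. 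For the orthogonal projection $\Pi$ this is immediate, $\|\Pi\|\le 1$, so everything comes down to the multiplication operator $F\mapsto gF$.

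Next I would invoke the boundary behaviour of bounded analytic functions: since $g\in{\mathcal H}_{\infty}^-$, it possesses non-tangential limits almost everywhere on $i{\mathbb R}$, and the resulting function lies in $L^{\infty}(i{\mathbb R})$ with $\|g\|_{L^{\infty}(i{\mathbb R})}\le \|g\|_{\infty}:=\sup_{\operatorname{Re} s<0}|g(s)|$ (Fatou's theorem together with the maximum principle; in fact equality holds). Consequently, for $F\in L^2(i{\mathbb R};X)$ the pointwise product $gF$ again lies in $L^2(i{\mathbb R};X)$ with $\|gF\|_{L^2(i{\mathbb R};X)}\le\|g\|_{\infty}\|F\|_{L^2(i{\mathbb R};X)}$, the scalar factor $g$ simply being pulled out of the $X$-norm inside the integral. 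Composing the three bounds gives, for $f\in L^2((0,\infty);X)$,
\[
  \|M_g f\| = \bigl\|{\mathfrak L}^{-1}\Pi\bigl(g\,{\mathfrak L}f\bigr)\bigr\| \le \|{\mathfrak L}^{-1}\|\,\|\Pi\|\,\|g\|_{\infty}\,\|{\mathfrak L}f\| \le \|g\|_{\infty}\,\|f\|,
\]
the normalising constants cancelling in the last step.

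The one point requiring genuine (if entirely classical) care, and hence the main obstacle, is the passage from $g$ viewed as an analytic function on the open left half-plane to a legitimate bounded multiplier of $L^2(i{\mathbb R};X)$: one must know that the boundary-value function exists and is bounded by $\|g\|_{\infty}$, and one must be satisfied that multiplying a trace of an $H^2(\mathbb{C}_+;X)$-function by the trace of $g$ (which is analytic on $\mathbb{C}_-$) is meaningful. It is, precisely because the product is formed on the common boundary $i{\mathbb R}$, where ${\mathfrak L}(f)$ is merely an $L^2$-function and $g$ merely an $L^\infty$-function; no analyticity of the product is needed, since $\Pi$ afterwards restores membership in $H^2(X)$. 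Everything else is bookkeeping with the Paley--Wiener isometry and the contractivity of $\Pi$.
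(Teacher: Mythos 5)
Your argument is correct and is exactly the standard one the paper has in mind: the paper omits the proof entirely (``easy to see''), and your factorisation of $M_g$ through the Paley--Wiener isometry, multiplication by the $L^\infty$ boundary trace of $g$, and the contractive projection $\Pi$ is the intended justification of \eqref{eq:3}. Nothing is missing.
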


To show that Definition \ref{D2.1} leads to a functional calculus, we
need the following concept from infinite dimensional systems theory,
see e.g.\ \cite{Weis89}.
\begin{definition}
  Let $Y$ be a Hilbert space, and $C$ a linear operator bounded from $D(A)$,
  the domain of $A$, to $Y$. $C$ is an {\em admissible} output operator
  if the mapping $x_0 \mapsto C e^{A\cdot} x_0$
  can be extended to a bounded mapping from $X$ to
  $L^2([0,\infty);Y)$.
\end{definition}

Since in this paper only admissible output operators appear, we shall sometimes
omit ``output''. In \cite{Zwar12} the following was proved.
\begin{theorem}
\label{T2.4}
  Let $A$ be the generator of an exponentially stable semigroup on the Hilbert space $X$.
   For every $g \in {\mathcal H}_{\infty}^-$ there exists a linear mapping $g(A) : D(A) \mapsto X$ such that
  \[
      (\left(M_g(e^{A\cdot}x_0)\right)(t) = g(A) e^{At} x_0, \qquad x_0 \in D(A).
   \]
  Furthermore,
  \begin{itemize}
  \item $g(A)$ is an admissible operator;
    \item
       $g(A) e^{At}$ extends to a bounded operator for $t>0$;
\item
       $g(A)$ commutes with the semigroup;
\item $g(A)$ can be extended to a closed operator $g_{\Gamma}(A)$ such that $g\mapsto g_{\Gamma}(A)$ has the properties of an (unbounded) functional calculus; 
\item This (unbounded) calculus extends the Hille-Phillips calculus.
  \end{itemize}
\end{theorem}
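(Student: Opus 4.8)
The plan is to read off all of the assertions from the algebraic structure of the Toeplitz operators $M_g$, using the left--shift semigroup on $L^2((0,\infty);X)$ as the main tool, and to produce the closed extension $g_\Gamma(A)$ at the end by a standard regularization. Three facts are the starting point: $M_{\mathbf 1}=I$; $M_{g_1}M_{g_2}=M_{g_1g_2}$ for $g_1,g_2\in{\mathcal H}_{\infty}^{-}$; and $M_g$ commutes with the left--shift semigroup $(S(\tau))_{\tau\ge0}$, $(S(\tau)f)(s)=f(s+\tau)$. The multiplicativity and the commutation with $S(\tau)$ follow from the Toeplitz product identities $T_\phi T_\psi=T_{\phi\psi}$ on $H^2$, valid whenever $\bar\phi\in H^\infty$ or $\psi\in H^\infty$, applied to the symbols in ${\mathcal H}_{\infty}^{-}$ and to the symbol $e^{\lambda\tau}$ of $S(\tau)$ --- all of these, together with their boundary conjugates, lying in the appropriate $H^\infty$ of a half--plane. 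I will also use repeatedly that $M_g$ commutes with the pointwise action of any fixed $B\in{\mathcal L}(X)$ on $X$--valued functions, since $M_g$ only involves scalar multiplication, a projection, and ${\mathfrak L}^{\pm1}$.

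Since $M_g$ is bounded and commutes with $(S(\tau))$, it maps the domain of the generator of $(S(\tau))$ --- the Sobolev space $H^1((0,\infty);X)$ --- into itself and intertwines the derivative. For $x_0\in D(A)$ the orbit $e^{A\cdot}x_0$ lies in $H^1((0,\infty);X)$ with derivative $e^{A\cdot}Ax_0$, hence $y:=M_g(e^{A\cdot}x_0)\in H^1((0,\infty);X)$ with $y'=M_g(e^{A\cdot}Ax_0)$. In particular $y$ has a continuous representative (tending to $0$ at infinity), and I set $g(A)x_0:=y(0)$, which is linear in $g$ and in $x_0$. Using $S(t)e^{A\cdot}x_0=e^{A\cdot}e^{At}x_0$ and the commutation,
\[
\bigl(M_g(e^{A\cdot}x_0)\bigr)(t)=y(t)=\bigl(S(t)y\bigr)(0)=\bigl(M_g(e^{A\cdot}e^{At}x_0)\bigr)(0)=g(A)e^{At}x_0 ,
\]
the last equality being the definition of $g(A)$ applied to $e^{At}x_0\in D(A)$; this is the claimed identity.

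Admissibility is then immediate: $g(A)e^{A\cdot}x_0=M_g(e^{A\cdot}x_0)$ in $L^2((0,\infty);X)$, so by Lemma~\ref{L2.3} and exponential stability $\|g(A)e^{A\cdot}x_0\|_{L^2}\le\|g\|_\infty\|e^{A\cdot}x_0\|_{L^2}\le c\,\|g\|_\infty\|x_0\|$, and $x_0\mapsto g(A)e^{A\cdot}x_0$ extends boundedly to $X$. For the commutation with the semigroup, note first that $e^{A\cdot}(A-r)^{-1}x_0=(A-r)^{-1}e^{A\cdot}x_0$ (the resolvent commutes with the semigroup); pulling the fixed operator $(A-r)^{-1}$ through $M_g$ and evaluating at $0$ gives $g(A)(A-r)^{-1}x_0=(A-r)^{-1}g(A)x_0$ for $x_0\in D(A)$, $\mathrm{Re}\,r>0$. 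Since $\mathfrak L$ of $t\mapsto g(A)e^{At}x_0$ is $\Pi\bigl(g\,{\mathfrak L}(e^{A\cdot}x_0)\bigr)$, the reproducing--kernel bound for point evaluations of $H^2$ together with Lemma~\ref{L2.3} yields $\|g(A)(\lambda-A)^{-1}\|\le c\,\|g\|_\infty(\mathrm{Re}\,\lambda)^{-1/2}$; taking $\lambda=1$ this gives $\|g(A)w\|\le C(\|w\|+\|Aw\|)$ for $w\in D(A)$, i.e.\ $g(A)$ is bounded on $D(A)$ for the graph norm. Iterating the resolvent commutation and using that $(\tfrac nt)^nR(\tfrac nt)^nx_0\to e^{At}x_0$ in the graph norm of $A$ when $x_0\in D(A)$, one obtains $g(A)e^{At}x_0=e^{At}g(A)x_0$. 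That $g(A)e^{At}$ extends to a bounded operator for every $t>0$ is the smoothing property of admissible observation operators for exponentially stable semigroups on Hilbert spaces (indeed with norm $O(t^{-1/2})$); I would invoke this from systems theory, cf.\ \cite{Weis89}. This is the point I expect to be most delicate in a self--contained treatment: the mere $L^2$--boundedness of $x_0\mapsto g(A)e^{A\cdot}x_0$ carries no pointwise information, and one must use the square--function / Carleson structure behind admissibility.

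Finally, the closed calculus. Fix the regularizer $e(s)=(s-r)^{-1}$ with $\mathrm{Re}\,r>0$, so $e\in{\mathcal H}_{\infty}^{-}$ has square--integrable boundary values and $e(A)=(A-r)^{-1}$ is injective with dense range. For every $g\in{\mathcal H}_{\infty}^{-}$ the products $e^{n}g$ are again bounded, analytic and square--integrable on the left half--plane, so writing $e^{n}g={\mathfrak L}(h)$ one has $(e^{n}g)(A)x_0=\int_0^\infty h(-\sigma)e^{A\sigma}x_0\,d\sigma$, a bounded operator on $X$ with $\|(e^{n}g)(A)\|\le c\,\|h\|_{L^2}$ by Cauchy--Schwarz and exponential stability. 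Combining $M_{e^{n_1}g_1}M_{e^{n_2}g_2}=M_{e^{n_1+n_2}g_1g_2}$ with the identity and the commutation above yields $(e^{n_1}g_1)(A)(e^{n_2}g_2)(A)=(e^{n_1+n_2}g_1g_2)(A)$; in particular $e\mapsto e^{n}(A)=(A-r)^{-n}$ is a homomorphism into ${\mathcal L}(X)$, which is exactly what the regularization procedure needs. I then define $g_\Gamma(A):=e(A)^{-1}(eg)(A)$ on the domain $\{x:(eg)(A)x\in\mathrm{ran}\,e(A)\}$; this is a closed operator, independent of the regularizer, it extends $g(A)$, and $g\mapsto g_\Gamma(A)$ satisfies the functional--calculus axioms --- $\mathbf 1\mapsto I$, $(s-r)^{-1}\mapsto(A-r)^{-1}$ (cf.\ the computation in the Introduction), linearity, and $g_{1,\Gamma}(A)g_{2,\Gamma}(A)\subseteq(g_1g_2)_\Gamma(A)$ --- by the usual regularization machinery. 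Since for $f={\mathfrak L}(h)$ with $h\in L^1$ supported on $(-\infty,0]$ our $f(A)=\int_0^\infty h(-\sigma)e^{A\sigma}\,d\sigma$ is exactly the Hille--Phillips operator, the resulting calculus extends the Hille--Phillips calculus. The remaining work --- bookkeeping of the regularization, independence of the regularizer, and the domain identities --- is routine.
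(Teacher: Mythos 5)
A point of reference first: the paper does not prove Theorem \ref{T2.4} itself, it quotes it from \cite{Zwar12}; your route (the Toeplitz operator $M_g$ commuting with the left shift on $L^2((0,\infty);X)$, definition of $g(A)x_0$ as the value at $t=0$ of the continuous representative of $M_g(e^{A\cdot}x_0)$ for $x_0\in D(A)$, resolvent commutation, and regularization by powers of $(s-r)^{-1}$) is essentially the construction of that reference, and most of your steps are sound. There is, however, one genuine flaw: the bullet ``$g(A)e^{At}$ extends to a bounded operator for $t>0$'' does \emph{not} follow from a general ``smoothing property of admissible observation operators''; no such property exists, and \cite{Weis89} contains no such statement. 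Counterexample: on $X=L^2(0,1)$ let $(e^{At}f)(\xi)=f(\xi+t)$ for $\xi+t\le 1$ and $0$ otherwise (nilpotent, hence exponentially stable), $D(A)=\{f\in H^1(0,1):f(1)=0\}$, and $Cf=f(0)$. Then $Ce^{At}f=f(t)$, so $C$ is admissible with constant $1$, yet $Ce^{At}$ is an unbounded point evaluation for every $t\in(0,1)$. Admissibility alone carries no pointwise-in-time information; what rescues the claim for $g(A)$ is precisely the additional structure you have already proved, namely commutation with the semigroup. For $x\in D(A)$ and $0<s<t$ write $g(A)e^{At}x=e^{A(t-s)}g(A)e^{As}x$, so with $M=\sup_{r\ge0}\|e^{Ar}\|$,
\[
  t\,\|g(A)e^{At}x\|^2\le M^2\int_0^t\|g(A)e^{As}x\|^2\,ds\le M^2\|M_g(e^{A\cdot}x)\|_{L^2}^2\le M^2c^2\|g\|_\infty^2\|x\|^2,
\]
giving $\|g(A)e^{At}\|\le Mc\,\|g\|_\infty\,t^{-1/2}$ on $D(A)$ and hence on $X$ by density. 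Since in your ordering the commutation is established before this bullet, the repair slots in directly; but as written the step would fail.

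Two smaller points you should make explicit. (i) Your bound $\|g(A)(\lambda-A)^{-1}\|\le c\|g\|_\infty(\mathrm{Re}\,\lambda)^{-1/2}$ silently uses the identity $\mathfrak{L}\bigl(M_g(e^{A\cdot}x_0)\bigr)(\lambda)=g(A)(\lambda-A)^{-1}x_0$; do not obtain it by pulling $g(A)$ out of the Laplace integral (that presupposes the graph-norm bound you are proving), but rather from the tools you already have: $M_g$ commutes with the resolvent of the shift generator $D$, $\bigl((\lambda-D)^{-1}f\bigr)(0)=\mathfrak{L}(f)(\lambda)$ for $f\in H^1$, and $(\lambda-D)^{-1}e^{A\cdot}x_0=e^{A\cdot}(\lambda-A)^{-1}x_0$. (ii) In the regularization step, passing from $M_{e^{n_1}g_1}M_{e^{n_2}g_2}=M_{e^{n_1+n_2}g_1g_2}$ to the operator identity $(e^{n_1}g_1)(A)(e^{n_2}g_2)(A)=(e^{n_1+n_2}g_1g_2)(A)$ requires the intermediate vector $(e^{n_2}g_2)(A)x_0$ to be in $D(A)$, which is not automatic; argue instead via the convolution representation of the regularized operators (Fubini on the kernels), or via a density argument in the spirit of Theorem \ref{T2.6}. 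Finally, the Hille--Phillips calculus is built on bounded measures, not only $L^1$ densities, so the last bullet needs the same convolution computation carried out for measures; this is routine but should be said.
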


Hence in general the functional calculus constructed in this way will
contain unbounded operators. However, they may not be 
``too unbounded'', as the product with any admissible operator is again
admissible. 
\begin{theorem}[Lemma 2.1 in \cite{Zwar12}]
\label{T2.6}
  Let $A$ be the generator of an exponentially stable semigroup on the
  Hilbert space $X$ and let $C$ be an admissible operator, then
  \[
      \left(M_g(C e^{A\cdot}x_0)\right)(t) = C g(A) e^{At} x_0, \qquad x_0 \in D(A^2).
   \]
  Moreover, $Cg(A)$ extends to an admissible output operator.
\end{theorem}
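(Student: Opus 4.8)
The plan is to exploit that $M_g$ acts as a \emph{scalar} Fourier multiplier, so it ought to commute with the fixed map $C$; the whole difficulty, and the reason the hypothesis $x_0\in D(A^2)$ is imposed, is that $C$ is merely bounded from $D(A)$ into $Y$. First I would record the commutation for \emph{bounded} operators: if $B$ is a bounded operator from $X$ into any Hilbert space, then $\mathfrak L$, pointwise multiplication by the scalar-valued $g$, and the projection $\Pi$ all commute with $B$, so $M_g(Bf)=B\,M_g(f)$ for $f\in L^2((0,\infty);X)$. Applying this with $B=C_n:=C\,n(nI-A)^{-1}$ --- which is bounded on $X$, being the composition of the bounded maps $n(nI-A)^{-1}\colon X\to D(A)$ and $C\colon D(A)\to Y$ --- and invoking Theorem~\ref{T2.4}, I obtain, for $x_0\in D(A)$,
\[
  \bigl(M_g(C_n e^{A\cdot}x_0)\bigr)(t)=C_n\,g(A)e^{At}x_0=C\,n(nI-A)^{-1}\bigl(g(A)e^{At}x_0\bigr).
\]

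The one genuinely non-formal step is to check that $g(A)$ maps $D(A^2)$ into $D(A)=D(C)$, which is exactly where the hypothesis $x_0\in D(A^2)$ enters and is, I expect, the main obstacle. For $x_0\in D(A^2)$ the difference quotients $v_h:=h^{-1}(e^{Ah}-I)x_0$ converge to $Ax_0$ in the graph norm of $A$ as $h\downarrow 0$ (since $Ax_0\in D(A)$); because $g(A)$ is bounded from $D(A)$ to $X$ and commutes with the semigroup (Theorem~\ref{T2.4}),
\[
  h^{-1}(e^{Ah}-I)g(A)x_0=g(A)v_h\longrightarrow g(A)Ax_0\qquad\text{in }X,
\]
and by the definition of the generator this forces $g(A)x_0\in D(A)$. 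Since the semigroup leaves $D(A^2)$ invariant, it follows that $g(A)e^{At}x_0\in D(A)=D(C)$ whenever $x_0\in D(A^2)$, so the right-hand side of the identity above makes sense in the limit.

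Next I would let $n\to\infty$ in that identity, for $x_0\in D(A^2)$. On the left, $C_n e^{A\cdot}x_0=Ce^{A\cdot}\bigl(n(nI-A)^{-1}x_0\bigr)$ because $n(nI-A)^{-1}$ commutes with the semigroup; by admissibility of $C$ together with $n(nI-A)^{-1}x_0\to x_0$ this converges to $Ce^{A\cdot}x_0$ in $L^2((0,\infty);Y)$, hence by Lemma~\ref{L2.3} $M_g(C_n e^{A\cdot}x_0)\to M_g(Ce^{A\cdot}x_0)$ in $L^2$. On the right, fix $t>0$ and set $v_t:=g(A)e^{At}x_0\in D(A)$ by the previous paragraph; then $n(nI-A)^{-1}v_t\to v_t$ in the graph norm, so $C\,n(nI-A)^{-1}v_t\to Cv_t=Cg(A)e^{At}x_0$ in $Y$. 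Passing to a subsequence along which the left-hand side converges a.e.\ and comparing with this pointwise limit yields $\bigl(M_g(Ce^{A\cdot}x_0)\bigr)(t)=Cg(A)e^{At}x_0$ for a.e.\ $t>0$, which is the claimed identity.

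For the final assertion, note that by admissibility of $C$ and Lemma~\ref{L2.3} the map $x_0\mapsto M_g(Ce^{A\cdot}x_0)$ is bounded from $X$ to $L^2((0,\infty);Y)$, with norm at most $\|g\|_\infty$ times the admissibility constant of $C$. By the identity just established it coincides on the dense subspace $D(A^2)$ with $x_0\mapsto Cg(A)e^{A\cdot}x_0$; hence the output map of $Cg(A)$ extends to a bounded map $X\to L^2((0,\infty);Y)$, i.e.\ $Cg(A)$ extends to an admissible output operator. Everything apart from the domain lemma in the second paragraph is routine bookkeeping with commuting operators and the norm bound of Lemma~\ref{L2.3}.
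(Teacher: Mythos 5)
This theorem is not proved in the paper at all: it is quoted verbatim from Lemma~2.1 of \cite{Zwar12}, so there is no in-paper argument to compare yours against. Judged on its own, your proof is correct and self-contained given the stated ingredients (Theorem~\ref{T2.4}, Lemma~\ref{L2.3} and the definition of admissibility): the commutation $M_g(Bf)=BM_g(f)$ for bounded $B$ is legitimate (in the time domain $\Pi$ is just truncation to $(0,\infty)$, which commutes with $B$), the regularization $C_n=Cn(nI-A)^{-1}$ is bounded because $n(nI-A)^{-1}$ maps $X$ boundedly into $D(A)$, and your difference-quotient argument showing $g(A)D(A^2)\subseteq D(A)$ is exactly the point where admissibility of $g(A)$ (i.e.\ graph-norm boundedness on $D(A)$) and the commutation with the semigroup from Theorem~\ref{T2.4} are needed; this is indeed the crux, and you handle it correctly. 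Two small points of bookkeeping you should make explicit: the identity $\bigl(M_g(C_ne^{A\cdot}x_0)\bigr)(t)=C_ng(A)e^{At}x_0$ holds a.e.\ with an exceptional null set a priori depending on $n$, so take the union over the subsequence before passing to the limit; and since the right-hand side $Cg(A)e^{At}x_0=Ce^{At}g(A)x_0$ is continuous in $t$ (as $g(A)x_0\in D(A)$ and $C$ is graph-norm bounded), the a.e.\ identity upgrades to the stated pointwise one. Also note that Lemma~\ref{L2.3} is applied to $Y$-valued $L^2$-functions, which is harmless since $Y$ is a Hilbert space, and that your last step establishes admissibility in the sense of the bounded extension of the output map $x_0\mapsto Cg(A)e^{A\cdot}x_0$ from $D(A^2)$ to $X$, which is precisely the content of the cited lemma.
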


\section{Analytic semigroups}
  
\mbox{}From Theorem \ref{T2.4} we know that $g(A)e^{At}$ is a bounded
operator for $t>0$. In this section we show that for analytic
semigroups the norm of $g(A)e^{At}$ behaves like $|\log(t)|$ for $t$
close to zero. Let $A$ generate an exponentially stable, analytic
semigroup on the Hilbert space $X$. Then there exists a $M,\omega>0$ such
that, see \cite[Theorem 2.6.13]{Pazy83},
\begin{equation}
  \label{eq:1}
  \|(-A)^{\frac{1}{2}} e^{At}\| \leq M \frac{1}{\sqrt{t}} e^{-\omega
    t}, \qquad t >0.
\end{equation}
Using this inequality, we prove the following estimate.
\begin{theorem}
\label{T3.1}
  Let $A$ generate an exponentially stable, analytic semigroup on the
  Hilbert space $X$. There exists $m,\varepsilon_{0}>0$ such that for every $g \in {\mathcal H}_{\infty}^-$,
  $\varepsilon\in(0,\varepsilon_{0})$ 
  \begin{equation}
    \label{eq:4}
    \|g(A) e^{A\varepsilon}\| \leq m \|g\|_{\infty} |\log(\varepsilon)|.
  \end{equation}
If we assume that $(-A^*)^{\frac{1}{2}}$ or $(-A)^{\frac{1}{2}}$ is
admissible, then 
\begin{equation}
  \label{eq:5}
  \|g(A) e^{A\varepsilon}\| \leq m \|g\|_{\infty}
  \sqrt{|\log(\varepsilon)|} \quad \mbox{for} \quad \varepsilon\in(0,\varepsilon_{0}).
\end{equation}
If both $(-A^*)^{\frac{1}{2}}$ and $(-A)^{\frac{1}{2}}$ are
admissible, then $g(A)$ is bounded.
\end{theorem}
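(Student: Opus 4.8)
The plan is to derive the first two estimates from one computation and the third from a variant of it. Fix $T>0$ once and for all, take $x_{0}\in D(A^{2})$ — a core on which Theorems~\ref{T2.4} and~\ref{T2.6} apply, the general case following afterwards since $g(A)e^{A\varepsilon}$ is bounded and $g(A)$ has a closed extension $g_{\Gamma}(A)$ — and write $w(t)=g(A)e^{At}x_{0}$. By Theorem~\ref{T2.4} and Lemma~\ref{L2.3}, together with exponential stability, $\|w\|_{L^{2}(0,\infty;X)}\le L\,\|g\|_{\infty}\|x_{0}\|$; averaging this bound over $[T/2,T]$ and applying the (uniformly bounded) semigroup gives $\|w(T)\|\le C_{T}\|g\|_{\infty}\|x_{0}\|$, with $L,C_{T}$ depending only on $A$ and $T$. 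The fundamental theorem of calculus yields
\[
   g(A)e^{A\varepsilon}x_{0}=g(A)e^{AT}x_{0}-\int_{\varepsilon}^{T}g(A)Ae^{As}x_{0}\,ds,
\]
and here \eqref{eq:1} is used to split $Ae^{As}=-(-A)^{1/2}e^{As/2}(-A)^{1/2}e^{As/2}$; commuting $g(A)$ through the semigroup turns the integrand into $-(-A)^{1/2}e^{As/2}\,v(s/2)$ with $v(r):=(-A)^{1/2}g(A)e^{Ar}x_{0}$ and $\|(-A)^{1/2}e^{As/2}\|\le M\sqrt{2/s}$. After the substitution $r=s/2$, Cauchy--Schwarz bounds the integral by $2M\bigl(\int_{\varepsilon/2}^{T/2}\frac{dr}{r}\bigr)^{1/2}\|v\|_{L^{2}}=2M\sqrt{\log(T/\varepsilon)}\,\|v\|_{L^{2}}$, so everything reduces to controlling $\|v\|_{L^{2}}$.

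The point is that $v$ is exactly the function produced by Theorem~\ref{T2.6}: $v=M_{g}\bigl((-A)^{1/2}e^{A\cdot}x_{0}\bigr)$, so by Lemma~\ref{L2.3}, $\|v\|_{L^{2}}\le\|g\|_{\infty}\bigl\|(-A)^{1/2}e^{A\cdot}x_{0}\bigr\|_{L^{2}}$. If $(-A)^{1/2}$ is admissible this last norm is $\le\|(-A)^{1/2}\|_{\mathrm{adm}}\|x_{0}\|$, hence $\|v\|_{L^{2}}\lesssim\|g\|_{\infty}\|x_{0}\|$, and together with the factor $\sqrt{\log(T/\varepsilon)}\sim\sqrt{|\log\varepsilon|}$ and the term $C_{T}\|g\|_{\infty}\|x_{0}\|$ this is precisely \eqref{eq:5}; the case when $(-A^{*})^{1/2}$ is admissible is reduced to it by passing to the adjoint semigroup, for which $(-A^{*})^{1/2}$ is the admissible operator, via $g(A)^{*}=\bar g(A^{*})$ and $\|g(A)e^{A\varepsilon}\|=\|\bar g(A^{*})e^{A^{*}\varepsilon}\|$ (using that $g(A)$ commutes with the semigroup), where $\bar g(s)=\overline{g(\bar s)}\in{\mathcal H}_{\infty}^{-}$, $\|\bar g\|_{\infty}=\|g\|_{\infty}$. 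For \eqref{eq:4}, where no admissibility is assumed, $(-A)^{1/2}e^{A\cdot}x_{0}$ need not be square integrable near $0$, so I would replace $(-A)^{1/2}$ by $C_{\delta}:=(-A)^{1/2}e^{A\delta}$ with $\delta:=\varepsilon/2$ and run the same argument with $v_{\delta}(r)=(-A)^{1/2}g(A)e^{A(r+\delta)}x_{0}$; now $\|v_{\delta}\|_{L^{2}}\le\|g\|_{\infty}\|C_{\delta}\|_{\mathrm{adm}}\|x_{0}\|$, and a direct estimate from \eqref{eq:1} shows that $\|C_{\delta}\|_{\mathrm{adm}}^{2}=\sup_{\|x\|\le1}\int_{\delta}^{\infty}\|(-A)^{1/2}e^{As}x\|^{2}\,ds$ is of order $|\log\delta|$, so the two square-root logarithms multiply to the single factor $|\log\varepsilon|$.

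For the last statement, instead of cutting at $\varepsilon$ one integrates all the way down: exponential stability gives $x_{0}=-\int_{0}^{\infty}Ae^{As}x_{0}\,ds$, and by closedness of $g_{\Gamma}(A)$ together with the same factorization,
\[
   g(A)x_{0}=2\int_{0}^{\infty}(-A)^{1/2}e^{Ar}v(r)\,dr,
\]
with $v$ as above and $\|v\|_{L^{2}}\lesssim\|g\|_{\infty}\|x_{0}\|$ (both square roots being admissible, \eqref{eq:5} already applies, so the integrand is also integrable near $0$). It then remains to see that the map $v\mapsto\int_{0}^{\infty}(-A)^{1/2}e^{Ar}v(r)\,dr$ is bounded from $L^{2}(0,\infty;X)$ to $X$: pairing with $y$ and using $\bigl((-A)^{1/2}\bigr)^{*}=(-A^{*})^{1/2}$ and $(e^{Ar})^{*}=e^{A^{*}r}$, its operator norm equals the admissibility constant of the output operator $(-A^{*})^{1/2}$ for the adjoint semigroup, which is finite by hypothesis. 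Hence $\|g(A)x_{0}\|\lesssim\|g\|_{\infty}\|x_{0}\|$ on $D(A^{2})$; since $e^{A\varepsilon}x_{0}\in D(A^{2})$ for all $x_{0}$ by analyticity, this gives $\sup_{\varepsilon>0}\|g(A)e^{A\varepsilon}\|<\infty$, and closedness of $g_{\Gamma}(A)$ forces $g(A)$ to be bounded. The step I expect to require the most care is not any single estimate — these are bounded-operator bookkeeping once Lemma~\ref{L2.3} is used to feed $Ce^{A\cdot}x_{0}$ through $M_{g}$ — but rather the duality input in the last part (identifying boundedness of the ``controllability'' map with admissibility of $(-A^{*})^{1/2}$) and the adjoint identity $g(A)^{*}=\bar g(A^{*})$ for the calculus, which require invoking standard facts from systems theory and from the Hille--Phillips calculus that are not spelled out in the excerpt.
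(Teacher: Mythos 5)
Your argument is essentially correct, but it takes a genuinely different route from the paper. The paper never uses the fundamental theorem of calculus or pointwise operator-norm bounds; instead it starts from the identity $\tfrac12\langle y,g(A)e^{2A\varepsilon}x\rangle=\int_0^\infty\langle y,(-A)e^{2At}g(A)e^{2A\varepsilon}x\rangle\,dt$ and splits the factor $(-A)e^{2At}$ \emph{symmetrically}, putting $(-A^*)^{1/2}e^{A^*\varepsilon}e^{A^*t}$ on the test vector $y$ and $(-A)^{1/2}e^{A\varepsilon}e^{At}$ on $x$; Cauchy--Schwarz in $t$ plus Lemma~\ref{L2.3} then gives the product of two square functions, each of size $\sqrt{|\log\varepsilon|}$ by \eqref{eq:1}, and admissibility of either square root simply removes the corresponding factor. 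Your scheme replaces the $y$-side square function by the scalar bound $\|(-A)^{1/2}e^{As/2}\|\lesssim s^{-1/2}$ fed into a Cauchy--Schwarz over $[\varepsilon,T]$ (giving $\sqrt{\log(T/\varepsilon)}$), and handles the $x$-side exactly as the paper does, via Theorem~\ref{T2.6} and Lemma~\ref{L2.3} (your $C_\delta=(-A)^{1/2}e^{A\delta}$ device for \eqref{eq:4} reproduces the paper's $\int_\delta^\infty\|(-A)^{1/2}e^{At}x\|^2dt\lesssim|\log\delta|$ computation). What the paper's symmetric form buys, and what your route loses, is that $(-A^*)^{1/2}$ only ever acts on $y$, so admissibility of $(-A^*)^{1/2}$ can be exploited with no information about adjoints of the calculus; your one-sided decomposition forces you, in the $(-A^*)^{1/2}$-admissible case of \eqref{eq:5}, to invoke $(g(A)e^{A\varepsilon})^*=\bar g(A^*)e^{A^*\varepsilon}$, which is true for this calculus but is not contained in Theorems~\ref{T2.4} and~\ref{T2.6} and would itself need a proof (you flag this, correctly, as the weak point). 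Your last part is a valid alternative to the paper's (which just observes that with both square roots admissible the $\varepsilon$ drops out of the bilinear estimate): the boundedness of $v\mapsto\int_0^\infty(-A)^{1/2}e^{Ar}v(r)\,dr$ from $L^2$ to $X$ is precisely the observation/control duality with admissibility of $(-A^*)^{1/2}$, and your pairing argument proves it; only the parenthetical claim that the integrand is ``integrable near $0$'' should be dropped --- absolute integrability does not follow from $v\in L^2$ and $\|(-A)^{1/2}e^{Ar}\|\lesssim r^{-1/2}$ --- and replaced by the uniform bound $\|\int_a^{a'}(-A)^{1/2}e^{Ar}v(r)\,dr\|\le K\|v\|_{L^2(a,a')}$ on truncations, which your own duality estimate supplies and which makes the improper integral converge. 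With the adjoint identity either proved or avoided (e.g.\ by symmetrizing as the paper does), your proof is complete.
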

\begin{proof}
For $y \in D(A^*)$, $x \in D(A^2)$ we have 
\begin{align*}
  \frac{1}{2}\langle y, g(A) e^{A2\varepsilon} x \rangle &=
   \int_0^{\infty} \langle y, (-A) e^{A2t} g(A) e^{A2\varepsilon} x \rangle dt\\
   &=
  \int_0^{\infty} \langle (-A^*)^{\frac{1}{2}} e^{A^*\varepsilon} e^{A^* t} y , g(A) (-A)^{\frac{1}{2}}e^{A\varepsilon} e^{A t} x \rangle dt,
\end{align*}
where we used that $g(A)$ commutes with the semigroup.
Using Cauchy-Schwarz's inequality, we find
\begin{align}
\label{eq:6}
  \frac{1}{2}|\langle y, g(A) e^{A2\varepsilon} x \rangle| 
  &\leq
  \| (-A^*)^{\frac{1}{2}} e^{A^*\varepsilon} e^{A^*\cdot} y\|_{L^2} 
  \|g(A) (-A)^{\frac{1}{2}}e^{A\varepsilon} e^{A \cdot} x\|_{L^2}\\
\nonumber
  &=
   \| (-A^*)^{\frac{1}{2}} e^{A^*\varepsilon} e^{A^*\cdot} y\|_{L^2} \cdot
  \|M_g\left( (-A)^{\frac{1}{2}} e^{A\varepsilon} e^{A\cdot} x\right)
  \|_{L^2}\\
\nonumber
  &\leq \| (-A^*)^{\frac{1}{2}} e^{A^*\varepsilon} e^{A^*\cdot}
  y\|_{L^2} \cdot \|g\|_{\infty} \cdot
  \| (-A)^{\frac{1}{2}} e^{A\varepsilon} e^{A\cdot} x \|_{L^2},
\end{align}
where we used Lemma \ref{L2.3}. Hence it remains to estimate the two
$L^2$-norms. Since $X$ is a Hilbert space $\left(e^{A^*t}\right)_{t
  \geq 0}$ is an analytic semigroup as well. Hence both $L^2$-norms
behave similarly. We do the estimate for $e^{At}$. For $\omega
\varepsilon < 1/4$, 
\begin{align*}
  \| (-A)^{\frac{1}{2}} e^{A\varepsilon} e^{A\cdot} x \|_{L^2}^2 &=
  \int_0^{\infty} \| (-A)^{\frac{1}{2}} e^{A\varepsilon} e^{At} x \|^2
  dt \\
  &= \int_{\varepsilon}^{\infty} \| (-A)^{\frac{1}{2}} e^{At} x \|^2
  dt\\
  &\leq M^2 \int_{\varepsilon}^{\infty} \frac {e^{-2\omega t}}{t}
    \|x\|^2 dt \\
	&= M^2\|x\|^2 \int_{1}^{\infty} \frac {e^{-2\varepsilon\omega t}}{t}
    dt\\
  &\leq M^2 \|x\|^2 m_1 |\log(\varepsilon\omega)|,
\end{align*}
where we used (\ref{eq:1}) and $m_{1}$ is an absolute constant. 

Combining the estimates and using the fact that $\omega$ is fixed, we
find that there exists a constant $m_3>0$ such that for all $x\in
D(A^2)$ and $y \in D(A^*)$ there holds
\[
  |\langle y, g(A) e^{A2\varepsilon} x \rangle| \leq m_3 |\log(\varepsilon)| \|g\|_{\infty} \|x\|\|y\|.
\]
Since $D(A^2)$ and $D(A^*)$ are dense in  $X$, we have proved the
estimate (\ref{eq:4}).

We continue with the proof of inequality (\ref{eq:5}). If $
(-A^*)^{\frac{1}{2}}$ is admissible, then (\ref{eq:6}) implies that
\begin{align*}
   \frac{1}{2}|\langle y, g(A) e^{A2\varepsilon} x \rangle| 
  &\leq
  \| (-A^*)^{\frac{1}{2}} e^{A^*\varepsilon} e^{A^*\cdot} y\|_{L^2} 
  \|g(A) (-A)^{\frac{1}{2}}e^{A\varepsilon} e^{A \cdot} x\|_{L^2}\\
\nonumber
  &\leq
   m_2 \|y\| \cdot
  \|M_g\left( (-A)^{\frac{1}{2}} e^{A\varepsilon} e^{A\cdot} x\right)
  \|_{L^2}.
\end{align*}
The estimate follows as shown previously. Let us now assume that
$(-A)^{\frac{1}{2}}$ is admissible. Then by Theorem \ref{T2.6} there
holds
\begin{align*}
  \|g(A) (-A)^{\frac{1}{2}}e^{A\varepsilon} e^{A \cdot} x\|_{L^2} \leq
  &\ \|g(A) (-A)^{\frac{1}{2}} e^{A \cdot} x\|_{L^2}\\
  = &\ \|M_g \left((-A)^{\frac{1}{2}} e^{A \cdot} x\right)\|_{L^2} \\
  \leq&\ \|g\|_{\infty} \|(-A)^{\frac{1}{2}}e^{A \cdot} x \|_{L^2}\\
  \leq&\ \|g\|_{\infty} m \|x\|,
\end{align*}
where we have used Lemma \ref{L2.3} and the admissibility of
$(-A)^{\frac{1}{2}}$. Now the proof of (\ref{eq:5}) follows similarly
as in the first part. 

If $(-A)^{\frac{1}{2}}$ and $(-A^*)^{\frac{1}{2}}$ are both
admissible, then we see from the above that the epsilon disappears from
the estimate, and since the semigroup is strongly continuous, $g(A)$
extends to a bounded operator.
\end{proof}

In \cite{SchweZwa14a}, it is shown that for any $\delta\in(0,1)$ there exists an analytic, exponentially stable semigroup on a Hilbert space, and $g\in\mathcal{H}_{\infty}^{-}$ such that $(-A)^{\frac{1}{2}}$ is admissible and $\|g(A)
e^{A\varepsilon}\|\sim (\sqrt{|\log(\varepsilon)|})^{1-\delta}$. Similarly, the sharpness of (\ref{eq:4}) is shown.

In the next section we relate the above theorem to results in the
literature.

\section{Closing remarks}
\label{Sec:4}
A natural question is whether the calculus above coincides with other definitions of the $\mathcal{H}_{\infty}^{-}$-calculus. As the construction extends the Hille-Phillips calculus, the answer is ``yes'', see \cite{SchweZwa14b}.

In \cite{Vits05}, Vitse showed a similar estimate as in (\ref{eq:4}) for analytic semigroups on general Banach spaces by using the Hille-Phillips calculus. The setting there is slightly different since bounded analytic semigroups and functions $g\in\mathcal{H}_{\infty}^{-}$ with bounded Fourier spectrum are considered.
In \cite{SchweZwa14a}, the authors improve Vitse's result with a more direct technique. In the course of that work, the approach to Theorem \ref{T3.1} via the calculus construction used here was obtained. Moreover, the techniques here and in Vitse's work \cite{Vits05} require that the functions $f$ are bounded, analytic on a half-plane. 
In \cite{SchweZwa14a} it is shown that the corresponding result is even true for functions $f$ that are only bounded, analytic on sectors which are larger than the sectorality sector of the generator $A$. 

Furthermore, Haase and Rozendaal proved that (\ref{eq:4}) holds
for general (exponentially stable) semigroups on Hilbert spaces, see  \cite{HaRo13}. Their key tool is a \textit{transference principle}. More general, they show that on general Banach spaces one has to consider the analytic multiplier algebra $\mathcal{AM}_{2}(X)$, as the function space to obtain a corresponding result. Note that $\mathcal{AM}_{2}(X)$ is continuously embedded in $\mathcal{H}_{\infty}^{-}$ with equality if $X$ is a Hilbert space.

The difference in the transference principle and the approach followed
here is that in the transference principle, estimates are first proved
for ``nice'' functions and than extended to the whole space ${\mathcal H}_{\infty}^-$. Whereas we prove the result first for ``nice'' elements in $X$, and then extend the operators $g(A)$.

The fact that the calculus is bounded for analytic semigroups when
both $(-A)^{\frac{1}{2}}$ and $(-A^*)^{\frac{1}{2}}$ are admissible,
can  already be found in \cite{LeMe03}. However, as the admissibility of $(-A)^{\frac{1}{2}}$ is equivalent to $A$ satisfying \textit{square function estimates}, the result is much older and goes back to McIntosh, \cite{McIn86}.

The construction of the ${\mathcal H}_{\infty}^-$-calculus followed here can be adapted to general Banach spaces, see \cite{SchweZwa2012,SchweZwa14b}.

\end{document}